\newtheorem{thm}{Theorem}[section]
\newtheorem{lem}[thm]{Lemma}
\newtheorem{prop}[thm]{Proposition}
\newtheorem{rem}[thm]{Remark}
\newtheorem{defn}[thm]{Definition}
\newcommand{\R}{\mathbb R}
\newcommand{\ho}{H^\circ}
\newcommand{\n}{\nabla}
\newcommand{\la}{\langle}
\newcommand{\ra}{\rangle}
\newcommand{\di}{\,\text{\rmfamily\upshape d}}
\newcommand{\bh}{\mathcal B^H}
\newcommand{\bho}{\mathcal B^{H^\circ}}
\newcommand{\D}{\hbox{D}}
\begin{document}

\parindent 0pc
\parskip 6pt
\overfullrule=0pt

\title[Anisotropic Kelvin transform]
{Anisotropic Kelvin Transform}

\author[F. Esposito, G. Riey, B. Sciunzi, D. Vuono]{Francesco Esposito$^{*}$, Giuseppe Riey$^{*}$, Berardino Sciunzi$^{*}$, Domenico Vuono$^{*}$}
\address{$^{*}$Dipartimento di Matematica e Informatica, UNICAL, Ponte Pietro  Bucci 31B, 87036 Arcavacata di Rende, Cosenza, Italy}
\email{francesco.esposito, giuseppe.riey, domenico.vuono@unical.it}
\email{sciunzi@mat.unical.it}

\subjclass[2020]{35A30, 35B06, 35J62, 35J75}

\keywords{Kelvin transform, anisotropic elliptic equations, Finsler geometry}

\date{\today}

\thanks{The authors are members of INdAM.
    F. Esposito, G. Riey and B. Sciunzi are partially supported by PRIN project 2017JPCAPN (Italy):
    {\em Qualitative and quantitative aspects of nonlinear PDEs.}}

\date{\today}

\begin{abstract}
In this paper we introduce and study a new Kelvin-type transform in the
anisotropic setting. In particular, we deal with semilinear and quasilinear anisotropic elliptic problems in the entire space.
\end{abstract}

\maketitle

\section{Introduction}

The aim of this paper is to consider elliptic partial differential equations involving the Finsler Laplace operator:
$$
-\Delta^H u:= - \operatorname{div}(H(\nabla u) \nabla H(\nabla u))\,,
$$
where $H$ is a Finsler norm (see assumption $(h_H)$ below), and for $H(\xi)=|\xi|$, the operator $\Delta^H$ reduces to the classical Laplacian. Elliptic equations involving the anisotropic operators arise as Euler-Lagrange equations of Wulff-type energy functionals. 

In the first part of the present work, we shall consider  solutions to the equation
\begin{equation}\label{eq:AnisotropicEq}
	-\Delta^H u = f(x) \qquad \text{ in } \R^N,\quad N\geq 2\,,
\end{equation}
which is the Euler-Lagrange equation of the \emph{Wulff-type energy functional}
\begin{equation}\label{eq:Wulff}
	\mathcal{W}(u)=\int_{\R^N} \left[\frac {H(\nabla u)^2}{2}-f(x)u\right] \di x\,.
\end{equation}
As usual, a solution  has to be understood in the weak distributional meaning, i.e. $u \in H^{1}_{loc}(\R^N)$ is a weak solution of \eqref{eq:AnisotropicEq} if
\begin{equation}\label{eq debole}
	\int_{\R^N} H(\nabla u) \langle \nabla H(\nabla u), \nabla
	\varphi \rangle\,\di x\,=\,\int_{\R^N} f(x)\varphi\,\di x, \quad\forall \varphi\in
	C^1_c(\R^N)\,.
\end{equation}


We address here this deep issue and we succeed in providing a suitable Kelvin-type transformation in the Riemannian case, 
 \begin{equation}\tag{$h_M$}\label{eq:Hmatriciale}
 	H(\xi)=\sqrt{\langle M\xi,\xi\rangle} 
 \end{equation}
 for some symmetric and positive definite matrix $M$. In this setting, we define the anisotropic Kelvin-type transformation by
$$T_H: \R^N \setminus \{0\} \rightarrow \R^N \setminus \{0\}, \quad T_H(\xi):= \frac{\nabla H(\xi)}{H(\xi)},$$
and consequently 
$$\hat u:=\frac{u \circ T_H}{H^{N-2}}  \quad \text{in } \R^N \setminus \{0\},$$ 
see Section \ref{Sec3} for more details.

\begin{rem}\rm\label{genericFinsler}
	Although our main results will be proved in the Riemannian case \eqref{eq:Hmatriciale}, a part of our construction holds for a general Finsler norm. This is why many preliminary results will be stated in full generality and also we shall try to use the language of Finsler geometry, where it is possible. This will also highlight the geometric idea that is behind our construction.
\end{rem}

With such construction at hand, we prove the following.
\begin{thm}\label{teoKelvin} Let $H$ be given by \eqref{eq:Hmatriciale}. If $u\in H^1_{loc}(\R^N)$ is a locally bounded solution of \eqref{eq:AnisotropicEq},  with $f \in L^2_{loc}(\R^N)$, then $\hat u $ belongs to $D^{1,2}$ far from the origin, and weakly solves the dual equation
	\begin{equation}
		-\Delta^{H^\circ} \hat u = \frac{f \circ T_H}{H^{N+2}} \quad  \text{in } \R^N\setminus \{0\},
	\end{equation}
	where $H^\circ$ is the dual norm.
\end{thm}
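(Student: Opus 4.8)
The plan is to conjugate the whole problem to the classical (isotropic) Kelvin transform by a single linear change of variables, which exposes the geometric idea that $T_H$ is nothing but the standard spherical inversion read in the coordinates that diagonalize $M$. In the Riemannian case \eqref{eq:Hmatriciale} one has $\nabla H(\xi)=M\xi/H(\xi)$, so $H(\nabla u)\nabla H(\nabla u)=M\nabla u$ and the Finsler Laplacian collapses to the constant coefficient operator $-\Delta^H u=-\operatorname{div}(M\nabla u)$; dually, since $H^\circ(x)=\sqrt{\langle M^{-1}x,x\rangle}$, one has $-\Delta^{H^\circ}\hat u=-\operatorname{div}(M^{-1}\nabla\hat u)$. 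Writing $A:=M^{1/2}$ (symmetric, positive definite, $A^2=M$, $A=A^{T}$), the elementary identities driving the argument are $|Ax|=\sqrt{\langle Mx,x\rangle}=H(x)$ and $H(A^{-1}\eta)=|\eta|$, together with $T_H(x)=Mx/H(x)^2$, whence $T_H(A^{-1}\eta)=A\eta^{\ast}$ and $A\,(Ax)^{\ast}=T_H(x)$, where $\eta^{\ast}:=\eta/|\eta|^2$ is the usual inversion.

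First I would show that the linear substitution straightens the operator. Setting $v(z):=u(Az)$, a change of variables in the weak formulation \eqref{eq debole} (transporting test functions by $\psi=\varphi\circ A^{-1}$ and using $A=A^{T}$, $A^2=M$) shows that $v\in H^1_{loc}(\R^N)$ is locally bounded and weakly solves the ordinary Poisson equation $-\Delta v=g$ with $g:=f\circ A\in L^2_{loc}(\R^N)$. This reduces everything to the isotropic Kelvin transform of $v$.

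Next I would apply the classical Kelvin transform to $v$: the function $\hat v(\eta):=|\eta|^{2-N}v(\eta^{\ast})$ lies in $D^{1,2}$ away from the origin and weakly solves $-\Delta\hat v=|\eta|^{-N-2}g(\eta^{\ast})$ in $\R^N\setminus\{0\}$. A direct computation with the identities above gives $\hat v(\eta)=|\eta|^{2-N}u(A\eta^{\ast})=\hat u(A^{-1}\eta)$, that is $\hat u(x)=\hat v(Ax)$. Running the substitution of the first step backwards, now with $M^{-1}=(A^{-1})^2$ in place of $M$, transports the equation for $\hat v$ into $-\operatorname{div}(M^{-1}\nabla\hat u)=G$ with $G(x)=|Ax|^{-N-2}g\big((Ax)^{\ast}\big)$. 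Finally $|Ax|=H(x)$ and $A(Ax)^{\ast}=T_H(x)$ turn the right-hand side into $G(x)=H(x)^{-N-2}f(T_H(x))$, which is exactly $-\Delta^{H^\circ}\hat u=(f\circ T_H)/H^{N+2}$; and the $D^{1,2}$ regularity far from the origin is inherited from that of $\hat v$ through the bi-Lipschitz linear map $A$.

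The main obstacle is not algebraic but functional-analytic: every step must be justified for weak, rather than classical, solutions. Concretely I expect the delicate points to be (i) verifying that the change of variables is legitimate in \eqref{eq debole} and preserves the $H^1_{loc}$/$L^2_{loc}$ framework, and (ii) establishing the classical Kelvin transform in the required weak form, in particular tracking integrability near the singular point: the hypothesis that $u$ is locally bounded is precisely what controls $\hat u$ near the origin (the image of infinity) and is what yields membership in $D^{1,2}$ on $\R^N\setminus B_\rho$ for every $\rho>0$. Once the isotropic weak Kelvin transform is in hand (proved by the usual computation of $\Delta(|\eta|^{2-N}v(\eta^{\ast}))$ tested against $C^1_c(\R^N\setminus\{0\})$), the anisotropic statement follows by the conjugation above with no further analytic input.
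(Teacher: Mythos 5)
Your proposal is correct, but it is not the route the paper takes --- in fact, it is precisely the alternative that the authors explicitly mention (and decline to pursue) in the remark following Theorem \ref{teoKelvin}: ``the result \dots could be also obtained by using a suitable change of variable together with the techniques known for the standard Laplace operator.'' Your conjugation by $A=M^{1/2}$ is exactly that: since $-\Delta^H u=-\operatorname{div}(M\nabla u)$ and $-\Delta^{H^\circ}\hat u=-\operatorname{div}(M^{-1}\nabla \hat u)$ in the Riemannian case, the identities $|Ax|=H(x)$, $A(Ax)^{\ast}=T_H(x)$ and $\hat u = \hat v\circ A$ reduce everything to the classical weak Kelvin transform of $v=u\circ A$, and your algebra checks out at every step. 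The paper instead works intrinsically in the anisotropic weak formulation: it transports test functions by $T_{H^\circ}$, changes variables $y=T_{H^\circ}(x)$, and then invokes three structural facts, namely the constant-determinant identity $J=\det M/H^{2N}$ of Lemma \ref{determinantecostante}, the duality identity \eqref{eq:propietajacobianifinsler} relating $\D T_{H^\circ}(T_H(y))$ to $\D T_H(y)$, the linearity and symmetry of $H^\circ\nabla H^\circ$, and finally the $H^\circ$-harmonicity of $H^{2-N}$. What your approach buys is brevity and transparency: it makes plain that the Riemannian anisotropic Kelvin transform is the Euclidean one read in the coordinates diagonalizing $M$, and it imports the delicate weak-solution bookkeeping wholesale from the classical theory. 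What the paper's approach buys is a proof whose individual ingredients are meaningful for general Finsler norms, so that one can see exactly where hypothesis \eqref{eq:Hmatriciale} enters and why the construction breaks down otherwise (the failure of $H^{2N}J$ to be constant, cf.\ Remark \ref{counterexample}); your argument, being a conjugation to the Euclidean case, cannot even be formulated outside the Riemannian setting. One caveat: your proof outsources two nontrivial steps --- the classical Kelvin transform for weak (not classical) solutions with $L^2_{loc}$ right-hand side, and the $D^{1,2}$ membership coming from local boundedness --- and a complete write-up would have to supply these, though you correctly identify them as the only remaining analytic content.
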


\begin{rem}
Let us stress the fact that the result in Theorem \ref{teoKelvin} could be also obtained by using a suitable change of variable together with the techniques known for the standard Laplace operator. Such a proof would not be so easier or shorter. This led us to provide a proof that emphasizes the geometrical properties of the anisotropic Kelvin transformation.
\end{rem}

As observed by Ciraolo, Figalli and Roncoroni in \cite{CiFiRo}, a Kelvin-type approach doesn't work, in general, in the anisotropic context. This also happen in the Euclidean framework when $p \neq 2$. As we shall show adding an example in Remark \ref{counterexample}, a pure invariance of the equation is not expected for general Finsler norms, even in the semilinear case $p=2$. 

To the best of our knowledge this is the first result in this direction in the anisotropic framework. Let us also mention the work of Monti and Morbidelli \cite{MM}
for a non-trivial application of the Kelvin transformation for equations involving
the Grushin operators.

The last part of our paper is devoted to study the following quasilinear elliptic equation:
\begin{equation}\label{eq:Nlaplaciano}
	-\Delta^H_N \text{ } u = g(x) \qquad \text{in } \R^N, \quad N\geq 3\,,
\end{equation}

where 	$\Delta^H_N \text { }u:=\operatorname{div}(H^{N-1}(\nabla u)\nabla H(\nabla u)) $ is the Finsler $N$-Laplace operator, and $g\in L^\frac{N}{N-1}_{loc}(\R^N)$.

As pointed out above, we say that  $u \in W^{1, N}_{loc}(\R^N)$ is a weak solution of \eqref{eq:AnisotropicEq} if
\begin{equation}\label{eq debole N-Laplace}
	\int_{\R^N}  H^{N-1}(\nabla u)\langle \nabla H(\nabla u), \nabla
	\varphi \rangle\,\di x\,=\,\int_{\R^N} g(x)\varphi\,\di x, \quad\forall \varphi\in
	C^1_c(\R^N)\,.
\end{equation}

In the Euclidean framework, i.e. if $H(\xi)=|\xi|$, we refer to the seminal paper by W. Chen and C. Li \cite{ChenLi}, where the, making use of the Kelvin transformation, classified solutions to the Liouville equation ($g=e^u$) via an adaptation of the moving plane method. As remarked above, in quasilinear case the situation is much more involved and we refer to the work of P. Esposito \cite{Pierpaolo} for a classification result of the Liouville equation for $p=N>2$, with the use of the Kelvin transform, but with a different approach to the one carried out in \cite{ChenLi}.

\begin{thm}\label{eqNlaplace}
    Let $H$ be given be \eqref{eq:Hmatriciale}. If $u \in W^{1, N}_{loc}(\R^N)$ is a locally bounded solution of \eqref{eq:Nlaplaciano}, with $g\in L^\frac{N}{N-1}_{loc}(\R^N)$, then $u^*:=u\circ T_H$ weakly solves the dual equation
    \begin{equation}\label{Ndualequation}
    -\Delta ^{H^\circ}_N u^*= \frac{g\circ T_H}{H^{2N}} \quad \text{in } \R^N\setminus \{0\},
    \end{equation}
    where $H^{\circ}$ is the dual norm.
\end{thm}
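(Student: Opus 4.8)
The plan is to deduce the dual weak formulation for $u^*$ from the weak formulation \eqref{eq debole N-Laplace} of the original equation through the change of variables $y=T_H(x)$, following the scheme used for Theorem \ref{teoKelvin} but carrying the homogeneity exponent $N$ in place of $2$. A pleasant feature of the $N$-Laplacian is that no conformal prefactor is needed: one transforms $u$ by mere composition, $u^*=u\circ T_H$, the weight $H^{N-2}$ of the semilinear case disappearing because the $p$-Kelvin exponent $(N-p)/(p-1)$ vanishes at $p=N$. As a preliminary I would record that, in the Riemannian case \eqref{eq:Hmatriciale}, $T_H$ is a smooth diffeomorphism of $\R^N\setminus\{0\}$ with inverse $T_{H^\circ}$, so that $u^*$ is locally bounded and lies in $W^{1,N}_{loc}(\R^N\setminus\{0\})$, making the dual formulation meaningful.

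Next I would establish the three algebraic identities that drive the computation. Using $\nabla H(\xi)=M\xi/H(\xi)$ and $\nabla H^\circ(\eta)=M^{-1}\eta/H^\circ(\eta)$ together with the explicit Jacobian
\begin{equation*}
 DT_H(x)=\frac{1}{H(x)^2}\Big(M-\frac{2}{H(x)^2}(Mx)(Mx)^T\Big),
\end{equation*}
which is symmetric since $M$ is, a direct computation gives $DT_H(x)\,M^{-1}\,DT_H(x)=H(x)^{-4}M$ and $|\det DT_H(x)|=\det M/H(x)^{2N}$. Combining these with the chain rule $\nabla u^*(x)=DT_H(x)\,\nabla u(T_H(x))$ then yields, writing $y=T_H(x)$ and $p=\nabla u(y)$, the two transformation rules
\begin{equation*}
 H^\circ(\nabla u^*(x))=\frac{H(p)}{H(x)^2},\qquad DT_H(x)\,\nabla H^\circ(\nabla u^*(x))=\frac{1}{H(x)^2}\,\nabla H(p),
\end{equation*}
which are the $N$-Laplacian analogues of the duality identities established in Section \ref{Sec3}.

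With these in hand I would take an arbitrary $\psi\in C^1_c(\R^N\setminus\{0\})$ and set $\varphi:=\psi\circ T_{H^\circ}$, so that $\psi=\varphi\circ T_H$ and hence $\nabla\psi(x)=DT_H(x)^T\nabla\varphi(y)=DT_H(x)\nabla\varphi(y)$; since $T_{H^\circ}$ is a homeomorphism of $\R^N\setminus\{0\}$, the function $\varphi\in C^1_c(\R^N\setminus\{0\})\subset C^1_c(\R^N)$ is admissible in \eqref{eq debole N-Laplace}. Exploiting the symmetry of $DT_H$ and the transformation rules, the integrand of the dual bilinear form collapses to
\begin{equation*}
 H^\circ(\nabla u^*)^{N-1}\langle\nabla H^\circ(\nabla u^*),\nabla\psi\rangle=\frac{H(p)^{N-1}}{H(x)^{2N}}\,\langle\nabla H(p),\nabla\varphi(y)\rangle .
\end{equation*}
Performing the change of variables $x\mapsto y=T_H(x)$, the Jacobian $\det M/H(x)^{2N}$ exactly cancels the weight $H(x)^{-2N}$ (the factors $\det M$ cancelling against the one produced on changing variables back), reducing the dual form to $\int H(\nabla u)^{N-1}\langle\nabla H(\nabla u),\nabla\varphi\rangle\,dy$. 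Invoking \eqref{eq debole N-Laplace} this equals $\int g\,\varphi\,dy$, and reverting to $x$ produces precisely $\int (g\circ T_H)\,H^{-2N}\,\psi\,dx$, the claimed source term.

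The step I expect to be the main obstacle is not the algebra but its analytic justification: one must verify that $u^*$ genuinely belongs to $W^{1,N}_{loc}$ away from the origin and that the chain rule $\nabla u^*=DT_H^T(\nabla u\circ T_H)$ and the change of variables are legitimate for the merely Sobolev, locally bounded solution $u$ rather than for a smooth function. Pinning down the degrees of homogeneity so that the weight $H^{2N}$ matches the Jacobian exactly, and checking that $g\circ T_H\,/\,H^{2N}$ lies in $L^{N/(N-1)}_{loc}$ on compact subsets of $\R^N\setminus\{0\}$, is where the bookkeeping must be carried out with care.
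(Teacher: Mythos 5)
Your proposal is correct and follows essentially the same route as the paper: transport test functions via $\varphi=\psi\circ T_{H^\circ}$, change variables by $y=T_H(x)$, and exploit the two key facts that $|\det \D T_H|=\det M/H^{2N}$ (Lemma \ref{determinantecostante}) and $H^\circ(\D T_H(y)\,\xi)=H(\xi)/H^2(y)$ (identity \eqref{proprietanormefinsler}), so that the Jacobian exactly cancels the weight and no conformal prefactor is needed. The only cosmetic difference is directional: you reduce the dual form back to the original weak formulation, whereas the paper pushes the original formulation forward into the dual one, but the algebraic identities and cancellations are the same.
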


We point out that under suitable assumptions on $f$ and $g$, due to the results in \cite{ACF, DB, Lib, Tolk}, it is possible to assume that solutions of \eqref{eq:AnisotropicEq} and \eqref{eq:Nlaplaciano} are of class $\mathcal{C}^{1,\alpha}$.

Concluding the introduction, let us mention that anisotropic operators arise in Finsler geometry, see \cite{BePa, CianSal, LePa, Sh1, Sh2}, as modelization e.g.~of material science \cite{CaHo, Gu}, relativity \cite{As}, biology \cite{AnInMa},
image processing \cite{EsOs, PeMa}. 

In the following we devote Section \ref{Sec1} to the introduction of the main geometrical notion used in the paper. In Section \ref{Sec3} we study the Kelvin-type transformation, defined above, and we prove our main results.

\section{Notation and preliminary results} \label{Sec1}

The aim of this section is to introduce some notation and to recall technical results  about anisotropic elliptic operator that will be involved in the proof of the main results.

\noindent In all the paper generic fixed and numerical constants will be denoted by $\mathcal{C}$ (with subscript in some cases) and they will vary from line to line.
For $a, b \in \R^N$ we denote by $a \otimes b$ the matrix whose entries are $(a \otimes b)_{ij}=a_ib_j$.
We remark that for any $v,w \in \R^N$ it holds that:
$$
\langle a \otimes b \ v, w \rangle = \langle b, v \rangle \langle a , w \rangle.
$$
Moreover, we denote with $I$ the identity matrix of order $N$.

Let $H$ be a function belonging to $C^2(\R^N \setminus \{0\})$.
$H$ is said a ``\emph{Finsler norm}'' if it satisfies the following assumptions:
\begin{itemize}
	\item[$(h_H)$]
\begin{enumerate}
    \item[(i)] $H(\xi)>0 \quad \forall \xi \in \R^N \setminus \{0\}$;

    \item[(ii)] $H(s \xi) = |s| H(\xi) \quad \forall \xi \in \R^N \setminus \{0\}, \, \forall s \in \R$;

    \item[(iii)] $H$ is \emph{uniformly elliptic}, that means the set $\mathcal{B}_1^H:=\{\xi \in \R^N  :  H(\xi) < 1\}$ is \emph{uniformly convex}
    \begin{equation}
        \exists \Lambda > 0: \quad \langle \D^2H(\xi)v, v \rangle \geq \Lambda |v|^2 \quad \forall \xi \in \partial \mathcal{B}_1^H, \; \forall v \in \nabla H(\xi)^\bot.
    \end{equation}
\end{enumerate}
\end{itemize}

Now we provide some remarks on Finsler norms.

\begin{rem}\label{RMK1}\rm
A set is said uniformly convex if the principal curvatures of its boundary are all strictly positive.
We point out that these kind of sets are also called ``\emph{strongly convex}'',
which is stronger a condition than the usual ``\emph{strict convexity}''.
Moreover, assumption (iii) is equivalent to assume that $\D^2 (H^2)$ is definite positive.
\end{rem}

\begin{rem}\rm
Actually, in literature the definition of ``\emph{Finsler norm}'' is not unique.
In differential geometry, it is usual to define this class of norms satisfying directly the assumption that $\D^2 (H^2)$ is definite positive. The last one allows to define the fundamental tensor, that is a very useful tool in order to show suitable properties of geometrical objects like curvatures, distances and so on.
However, in many framework, it is enough to require that $\mathcal{B}_1^H$ is only strictly convex, and not strongly convex. Therefore, it is usual to give a definition of Finsler norm without the requirement (iii) (see for instance \cite{BeFeKa, BePa, FeKa}),
but with the only assumption that $H^2$ has to be strictly convex.
For example, for $p\neq 2$,
the $p$-norm in $\R^N$ is not uniformly elliptic, because its unit ball $\mathcal{B}_1^H$ is not strongly convex
but only strictly convex.
For our computations we need the stronger condition (iii) and we choose
a particular class of Finsler norms satisfying it.
\end{rem}

Since $H$ a norm in $\R^N$, we immediately get that there exists $c_1,c_2>0$ such that:
\begin{equation}\label{H equiv euclidea}
c_1|\xi|\leq H(\xi)\leq c_2|\xi|,\,\quad\forall\, \xi\in\R^N.
\end{equation}

The \emph{dual norm} $\ho:\R^N\to [0,+\infty)$ is defined as:
$$
\ho(x)=\sup\{\la \xi,x\ra: H(\xi)\leq 1\}.
$$
It is easy to prove that $\ho$ is also a Finsler norm and it has the same regularity
properties of $H$.
Moreover it follows that $(\ho)^\circ=H$.
For $r>0$ and $\overline x\in\R^N$ we define:
$$
\bh_r(\overline x)=\{x\in\R^N: H(x-\overline x)\leq r\}
$$
and
$$
\bho_r(\overline x)=\{x\in\R^N: \ho(x-\overline x)\leq r\}.
$$
For simplicity, when $\overline x=0$, we set: $\bh_r=\bh_r(0),\,\bho_r=\bho_r(0)$.
In literature $\bh_r$ and $\bho_r$ are also called ``Wulff  shape'' and
``Frank diagram'' respectively.

We recall that, since $H$ is a differentiable and $1$-homogeneous function, it holds the Euler's characterization result, i.e.
\begin{equation}\label{eulero}
\la\n H(\xi),\xi\ra=H(\xi) \qquad\forall\,x\in\R^N,
\end{equation}
and
\begin{equation}\label{grad 0 omog}
\nabla H(t\xi)=\hbox{sign}(t)H(\xi) \qquad\forall\,\xi\in\R^N,\, \forall t\in\R,
\end{equation}
and the same is true for $H^\circ$.
Moreover, there holds following identities:
\begin{equation}\label{eq:PropFinsler1}
H(\nabla H ^\circ (x)) = 1 = H^\circ (\nabla H (x))
\end{equation}
and
\begin{equation}\label{eq:PropFinsler2}
    H(x)\nabla H^\circ(\nabla H(x))=x=H^\circ(x)\nabla H(\nabla H^\circ (x)),
\end{equation}
and we refer the reader to \cite{BePa} for a complete proof. For more details on Finsler geometry see for instance \cite{BaChSh, BePa, JaSa}.

As pointed out in Remark \ref{genericFinsler}, a part of our construction hold for generic Finsler norms, but the main results are proved in the Riemannian case \eqref{eq:Hmatriciale}. In this case
$$	H^\circ(\xi)=\sqrt{\langle M^{-1}\xi,\xi\rangle}.$$

We refer to \cite{CoFaVa2, CoFaVa1, FeKa} for a discussion on the geometry of the Riemannian case and for interesting related results.

\section{Anisotropic Kelvin transform} \label{Sec3}

The aim of this section is to prove Theorem \ref{teoKelvin}. In order to achieve our claim, we introduce the formal definition of Kelvin transformation in the context of Finsler geometry.

\begin{defn}
Let $H$ be a Finsler norm.
We define the map $T_H: \R^N \setminus \{0\} \rightarrow \R^N \setminus \{0\}$ as
\begin{equation}\label{def Kelvin}
T_H(\xi)= \frac{\nabla H(\xi)}{H(\xi)}.
\end{equation}
\end{defn}

Note that, when $H(\xi)=|\xi|$, $T_H$ coincides with the usual Euclidean Kelvin transform. Hence, we observe that it holds the following result.

\begin{prop}
The map $T_H$ is a global diffeomorphism
with inverse given by $T_{H^\circ}$.
\end{prop}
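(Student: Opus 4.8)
The plan is to produce an explicit two-sided inverse for $T_H$, namely $T_{H^\circ}$, and to observe that both maps are $C^1$ on $\R^N \setminus \{0\}$, so that $T_H$ is automatically a global diffeomorphism. The only real content is the algebraic identity $T_{H^\circ} \circ T_H = \mathrm{id}$; the reverse composition then follows from the symmetry of the construction together with $(H^\circ)^\circ = H$.

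First I would check that $T_H$ is well-defined as a map into $\R^N \setminus \{0\}$ and is $C^1$. Smoothness is immediate: since $H \in C^2(\R^N \setminus \{0\})$ and $H > 0$ there by $(h_H)$(i), the quotient $\n H / H$ is $C^1$ away from the origin. That the image avoids the origin follows from Euler's identity \eqref{eulero}: $\la \n H(\xi), \xi \ra = H(\xi) > 0$ forces $\n H(\xi) \neq 0$, hence $T_H(\xi) \neq 0$. The same reasoning applies verbatim to $H^\circ$, so $T_{H^\circ}$ is likewise a $C^1$ self-map of $\R^N \setminus \{0\}$.

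The heart of the proof is the computation of $T_{H^\circ}(T_H(\xi))$. Setting $y = T_H(\xi) = \n H(\xi)/H(\xi)$, I would first use the $1$-homogeneity of $H^\circ$ together with \eqref{eq:PropFinsler1} to get $H^\circ(y) = H^\circ(\n H(\xi))/H(\xi) = 1/H(\xi)$. Then
$$
T_{H^\circ}(y) = \frac{\n H^\circ(y)}{H^\circ(y)} = H(\xi)\, \n H^\circ(y).
$$
The key manipulation is that $\n H^\circ$ is $0$-homogeneous, being the gradient of a $1$-homogeneous function, so the positive factor $1/H(\xi)$ inside the argument can be dropped: $\n H^\circ(y) = \n H^\circ(\n H(\xi))$. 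Plugging this in and invoking the first identity in \eqref{eq:PropFinsler2}, namely $H(\xi)\,\n H^\circ(\n H(\xi)) = \xi$, yields exactly $T_{H^\circ}(T_H(\xi)) = \xi$.

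Finally, exchanging the roles of $H$ and $H^\circ$ and using $(H^\circ)^\circ = H$ gives $T_H \circ T_{H^\circ} = \mathrm{id}$ by the identical computation, so $T_{H^\circ}$ is a genuine two-sided inverse; being mutually inverse $C^1$ maps, $T_H$ and $T_{H^\circ}$ are $C^1$-diffeomorphisms. I expect the only delicate point to be the correct bookkeeping of homogeneity degrees, that is, recognizing that $\n H$ and $\n H^\circ$ are homogeneous of degree $0$ so that the norm factors can be pulled out of the gradients cleanly, since once the two Finsler identities \eqref{eq:PropFinsler1}--\eqref{eq:PropFinsler2} are in hand the rest is a short substitution. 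An alternative would be to invoke the inverse function theorem after checking that the differential of $T_H$ is invertible, but producing the explicit inverse is both cleaner and more informative.
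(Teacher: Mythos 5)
Your proof is correct and follows essentially the same route as the paper's: an explicit computation of the composition using the $0$-homogeneity of the gradients, the $1$-homogeneity of the norms, and the duality identities \eqref{eq:PropFinsler1}--\eqref{eq:PropFinsler2}, the only difference being that you compute $T_{H^\circ}\circ T_H$ while the paper computes $T_H\circ T_{H^\circ}$, which are interchangeable by the symmetry $(H^\circ)^\circ=H$. Your additional checks (smoothness of $\n H/H$ and non-vanishing of $\n H$ via Euler's identity \eqref{eulero}) are sound and slightly more complete than the paper's write-up, which leaves them implicit.
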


\begin{proof}
We show that, if $y=T_H(x)$, then $x=T_{H^\circ}(y)$.
By \eqref{grad 0 omog},\eqref{eq:PropFinsler1} and \eqref{eq:PropFinsler2} we have:
\begin{equation}\label{inversa kelvin}
	\begin{split}
 T_H(T_{H^\circ}(y))&= \n H\left(\frac{\n H^\circ(y)}{H^\circ(y)}\right) H^{-1} \left(\frac{\n H^\circ(y)}{H^\circ(y)}\right)\\
 &=\n H(\n H^\circ(y)) H^\circ(y) H^{-1}(\n H^\circ(y))\\
 &=H^\circ(y)\n H(\n H^\circ(y))) =y\,.
\end{split}
\end{equation}
\end{proof}
Now, we denote by $\D T_H$ the Jacobian matrix of $T_H$, namely:
$$
\D T_H=\frac{1}{H^2}\left[H\D^2 H-\n H\otimes\n H\right]
= \frac{1}{H^2}\left[\D\left(\frac 1 2\n (H^2)\right)-2\n H\otimes\n H\right]\,.
$$
In particular, if $H$ satisfies \eqref{eq:Hmatriciale}, we have
\begin{equation}\label{DT M}
\D T_H(x)=\frac{1}{H(x)^2}\left[M-\frac{2}{H(x)^2}Mx\otimes Mx\right]\,.
\end{equation}
Moreover, we set $J(x)= |\det \D T_H(x)|$. Using these notations, we are ready to prove the following key lemma.

\begin{lem}\label{determinantecostante}
If $H$ is a Finsler norm that satisfy \eqref{eq:Hmatriciale}, then
\begin{equation}\label{determinante}
    J(x)=\frac{\det {M}}{H(x)^{2N}} \quad \forall x\in \R^N \setminus \{0\}.
\end{equation}
\end{lem}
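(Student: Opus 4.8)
The plan is to compute the determinant directly from the explicit expression \eqref{DT M} for $\D T_H$ in the Riemannian case, reducing everything to a single rank-one determinant. First I would pull out the scalar factor: since \eqref{DT M} reads $\D T_H(x)=H(x)^{-2}\big[M-\tfrac{2}{H(x)^2}Mx\otimes Mx\big]$ and a scalar $c$ multiplying an $N\times N$ matrix contributes $c^N$ to the determinant, I obtain
$$\det \D T_H(x)=\frac{1}{H(x)^{2N}}\,\det\!\Big(M-\frac{2}{H(x)^2}\,Mx\otimes Mx\Big).$$

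Next I would factor $M$ out of the bracket on the left. Using the tensor-product convention recalled in Section~\ref{Sec1}, namely $(a\otimes b)v=\la b,v\ra a$, one checks that $M^{-1}(Mx\otimes Mx)=x\otimes Mx$, because for every $v$ we have $M^{-1}(Mx\otimes Mx)v=\la Mx,v\ra M^{-1}(Mx)=\la Mx,v\ra x=(x\otimes Mx)v$. Hence
$$M-\frac{2}{H(x)^2}Mx\otimes Mx=M\Big(I-\frac{2}{H(x)^2}\,x\otimes Mx\Big),$$
and by multiplicativity of the determinant the problem reduces to evaluating the determinant of the rank-one perturbation $I-\tfrac{2}{H(x)^2}\,x\otimes Mx$ of the identity.

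The key (and essentially the only) computational step is then the rank-one determinant formula $\det(I+a\otimes b)=1+\la b,a\ra$. Applying it with $a=-\tfrac{2}{H(x)^2}x$ and $b=Mx$ gives
$$\det\Big(I-\frac{2}{H(x)^2}x\otimes Mx\Big)=1-\frac{2}{H(x)^2}\la Mx,x\ra.$$
Since $\la Mx,x\ra=H(x)^2$ by \eqref{eq:Hmatriciale}, the right-hand side equals $1-2=-1$. Collecting the three factors yields $\det \D T_H(x)=-\det M/H(x)^{2N}$, and taking absolute values (recall $\det M>0$, as $M$ is symmetric positive definite) gives exactly $J(x)=\det M/H(x)^{2N}$, as claimed.

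I do not expect any genuine obstacle here: the only point requiring care is the bookkeeping of the tensor-product convention, both when factoring $M$ out and when casting the perturbation in the form $I+a\otimes b$; once this is done, the identity $\la Mx,x\ra=H(x)^2$ makes the determinant collapse to $-1$ automatically. Should one wish to avoid the determinant lemma, an equivalent route is to diagonalize in an $M$-orthonormal frame adapted to $x$, in which $\D T_H$ becomes block-diagonal with an explicit entry in the direction of $x$ and a scalar multiple of the identity on the $M$-orthogonal complement; this reproduces the same sign factor $-1$.
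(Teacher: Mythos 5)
Your proof is correct, but it takes a genuinely different (and more streamlined) route than the paper. The paper proceeds in two steps: it first treats the case of a diagonal matrix $L$, writing $\D T_H(x)=\frac{1}{H(x)^2}\sqrt{L}\left(I-\frac{2}{\la \sqrt{L}x,\sqrt{L}x\ra}\sqrt{L}x\otimes\sqrt{L}x\right)\sqrt{L}$ and invoking the reflection-type determinant identity \eqref{MatriceRiey}, and then reduces the general case \eqref{eq:Hmatriciale} to the diagonal one via the spectral theorem, conjugating $\D T_H$ by the orthogonal matrix $C$. You instead factor $M$ out of \eqref{DT M} \emph{asymmetrically}, $M-\frac{2}{H(x)^2}Mx\otimes Mx=M\left(I-\frac{2}{H(x)^2}x\otimes Mx\right)$, and finish with the matrix determinant lemma $\det(I+a\otimes b)=1+\la a,b\ra$, so that $\la Mx,x\ra=H(x)^2$ collapses the determinant to $-1$; no diagonalization or square root of $M$ is needed. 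Both arguments hinge on the same underlying fact (a rank-one perturbation of the identity built from $x$ has determinant $\pm 1$), but your version is shorter and more self-contained. One further point in your favor: you track the sign correctly. The matrix $I-\frac{2}{|y|^2}y\otimes y$ is a Householder reflection, whose determinant is $-1$, not $+1$ as written in \eqref{MatriceRiey}; this slip is harmless in the paper because $J(x)$ is defined as the absolute value of $\det\D T_H(x)$, but your explicit passage from $\det\D T_H(x)=-\det M/H(x)^{2N}$ to $J(x)=\det M/H(x)^{2N}$ (using $\det M>0$) is the cleaner statement of what is actually true.
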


\begin{proof}
Let $L$ be a diagonal matrix with entries $d_1,\ldots,d_N$ and
denote by $\sqrt{L}$ the diagonal matrix whose entries are $\sqrt{d_1},\ldots,\sqrt{d_N}$.
Suppose $H(x)=\sqrt{\langle L x,x\rangle}$.
In view of \eqref{DT M} we have:
\begin{equation}
\begin{split}
\D T_H(x)&= \frac{1}{H(x)^2}\left(L-\frac{2}{\langle L x,x\rangle} L x \otimes L x \right)=\\
&=\frac{1}{H(x)^2}\sqrt{L}\left(I-\frac{2}{\langle \sqrt{L}x,\sqrt{L}x\rangle} \sqrt{L}x \otimes \sqrt{L}x \right)\sqrt{L}
\end{split}
\end{equation}
Since
\begin{equation}\label{MatriceRiey}
  \det \left(I-\frac{2}{|y|^2} y \otimes y \right)=1,\quad \forall y\in \R^N \setminus \{0\},
\end{equation}
we get \eqref{determinante} in the case $L$ diagonal.
Let $H$ be of the form  \eqref{eq:Hmatriciale}. The spectral theorem ensures that it holds the decomposition $C^tMC=L$, where $C$ and $D$ are respectively an orthogonal and diagonal matrices. Having in mind this fact, we have
\begin{equation}
    C^t\D T_H (x)C=
    \frac{1}{H(x)^2}\sqrt{L}
    \left(I-\frac{2}{\langle \sqrt{L}C^tx,\sqrt{L}C^tx\rangle} \sqrt{L}C^tx \otimes \sqrt{L}C^tx \right)
    \sqrt{L}.
\end{equation}
By (\ref{MatriceRiey}) we get the thesis.
\end{proof}

Assumption \eqref{eq:Hmatriciale} is crucial in the previous lemma, and this result is essential also in the proof of Theorem \ref{teoKelvin} and Theorem \ref{eqNlaplace}. Indeed, our approach doesn't work without Lemma \ref{determinantecostante}, as pointed out in the next remark.
\begin{rem}\rm \label{counterexample}
In the proof of Theorem \ref{teoKelvin} it will be crucial the fact that $H(x)^{2N}J(x)$  is constant, see \eqref{determinante}.
For a generical Finsler norm this property is not true.
For example, if we consider the norm 
$$H(x)=\sqrt[4]{x_1^4+3x_1^2x_2^2+x_2^4}$$
for $x=(x_1,x_2)\in\R^2$, then it follows that 
$H(x)^{2N}J(x)$ is not constant, even if $H$ satisfies all the conditions (i)-(ii)-(iii)
stated above in the definition of a Finsler norm.
\end{rem}

Now, in the same spirit of the seminal paper of Caffarelli, Gidas and Spruck \cite{CGS}, given a function $u(x)$, we define the action of the Kelvin transform on $u$ as follows
\begin{equation}\label{kelv}
\hat u(y):=\frac{1}{H(y)^{N-2}} u(T_H(y)),
\end{equation}
for every $y\in\R^N \setminus \{0\}$.
\begin{rem}\rm
It is easy to check that
\begin{equation}\label{u kelv inversa}
u(x):=\frac{1}{H^\circ(x)^{N-2}} \hat u(T_{H^\circ}(x)),
\end{equation}
for every $x \in \R^N \setminus \{0\}$.
In fact, using \eqref{eq:PropFinsler1}, by \eqref{kelv} we have:
\begin{equation}
\begin{split}
u(x)&=\hat u\left(T_{H^\circ}(x)\right)H\left(T_{H^\circ}(x)\right)^{N-2}=
  \hat u\left(T_{H^\circ}(x)\right)H\left(\frac{\n H^\circ(x)}{H^\circ(x)}\right)^{N-2}\\
&=\hat u\left(T_{H^\circ}(x)\right)\frac{1}{H^\circ(x)^{N-2}}H(\n H^\circ(x))^{N-2}=
\frac{\hat u\left(T_{H^\circ}(x)\right)}{H^\circ(x)^{N-2}}\,.
\end{split}
\end{equation}
\end{rem}

Now we are  ready to prove the first result of our paper.

\begin{proof}[Proof of Theorem \ref{teoKelvin}]
We use $\psi\in C^1_c(\R^N\setminus \{0\})$ as test function in \eqref{eq debole}.
Then $\varphi(x):=\psi(T_{H^\circ }(x))$ can also be used as test function in \eqref{eq debole}, obtaining:
\begin{equation}\label{eq:primaequazione}
	    \begin{split}
    \int_{\R^N} H(\nabla u(x)) \langle \nabla H(\nabla u(x)), \D T_{H^\circ}(x)\nabla \psi (T_{H^\circ}(x))\rangle\,&\di x\\
    =   \int_{\R^N} f(x) \psi (T_{H^\circ}(x))\,&\di x.
\end{split}
\end{equation}
Using the change of variable $y=T_{H^\circ}(x)$, \eqref{eq:primaequazione} becomes
\begin{equation}\label{eq:secondaquazione}
    \begin{split}
    \int_{\R^N} H(\nabla u(T_H(y))) \langle \D T_{H^\circ}(T_H(y)) \nabla H(\nabla u(T_H(y))), \nabla \psi (y)\rangle J(y)\,&\di y\,\\
    =\int_{\R^N} f(T_H(y))\psi (y)J(y)\,&\di y.
\end{split}
\end{equation}
Under our assumption \eqref{eq:Hmatriciale} we deduce that
\begin{equation}\label{eq:propietajacobianifinsler}
    \begin{split}
        H(\nabla u(T_H(y)))& \D T_{H^\circ}(T_H(y))\nabla H(\nabla u(T_H(y)))\\
        &  =H(y)^4H^{\circ}(\D T_H(y)\nabla u(T_H(y)))\nabla H^\circ(\D T_H(y)\nabla u(T_H(y))).
    \end{split}
\end{equation}
If we set $u^*(y):=u(T_H(y))$ and $f^*(y):=f(T_H(y))$,
by using \eqref{eq:propietajacobianifinsler} and Lemma \ref{determinantecostante},
\eqref{eq:secondaquazione} becomes:
\begin{equation}\label{eq:terzaequzione}
    \begin{split}
        \int_{\R^N} \frac{1}{H(y)^{2N-4}}H^{\circ}(\nabla u^*(y)) \langle \nabla H^{\circ}(\nabla u^*(y)), \nabla \psi (y) \rangle  \,&\di y\\
        =\int_{\R^N} \frac{1}{H(y)^{2N}}f^*(y)\psi (y) \,&\di y.
    \end{split}
\end{equation}
Let $\phi \in C^1_c(\R^N\setminus \{0\})$ be such that $\psi(y)=H(y)^{N-2}\phi(y)$.
Then $\psi$ can be used as test function in \eqref{eq:terzaequzione}, getting:
\begin{equation}
    \begin{split}
    &\int_{\R^N}\frac{1}{H(y)^{N-2}}H^{\circ}(\nabla u^*(y)) \langle \nabla H^{\circ}(\nabla u^*(y)), \nabla \phi (y) \rangle  \\
    &\quad -\int_{\R^N} H^{\circ}(\nabla u^*(y)) \langle \nabla H^{\circ}(\nabla u^*(y)),\nabla \left(\frac{1}{H(y)^{N-2}}\right)\rangle \phi(y) \,
    \di x\\
    & = \int_{\R^N} \frac{1}{H(y)^{N+2}}f^*(y)\phi (y) \, \di y
    \end{split}
\end{equation}
Recalling \eqref{grad 0 omog}, the fact that $H$ is a $1$-homogeneous function and that
$$
\nabla \hat u= \nabla \left(\frac{1}{H^{N-2}}\right) u^*+\frac{1}{H^{N-2}}\nabla u^*,
$$
we get
\begin{equation}
\begin{split}
&\int_{\R^N} H^{\circ}\left(\nabla \hat u(y)-\nabla \left(\frac{1}{H(y)^{N-2}}\right)u^*\right) \times \\
& \qquad \times \langle \nabla H^{\circ}\left(\nabla \hat u(y)-\nabla \left(\frac{1}{H(y)^{N-2}}\right)u^*\right), \nabla \phi (y) \rangle \,\di y\\
& \quad-\int_{\R^N} H^{\circ}(\nabla u^*(y)) \langle \nabla H^{\circ}(\nabla u^*(y)),\nabla \left(\frac{1}{H(y)^{N-2}}\right) \rangle \phi(y)  \,\di y\\
&=\int_{\R^N} \frac{1}{H(y)^{N+2}}f^*(y)\phi (y) \,\di y
\end{split}
\end{equation}
By \eqref{eq:Hmatriciale}, we deduce that $H^\circ \nabla H^\circ$ is linear and symmetric
with respect to the Euclidean inner product of $\R^N$, i.e.
$$
\langle H^\circ (x)\nabla H^\circ(x),y\rangle=\langle x,H^\circ (y) \nabla H^\circ (y)\rangle \quad \forall x,y\in \R^N.
$$
Hence, we have
\begin{equation}\label{eq:DualEq4}
\begin{split}
&\int_{\R^N} H^{\circ}(\nabla \hat u(y)) \langle \nabla H^{\circ}(\nabla \hat u(y)), \nabla \phi (y) \rangle  \,\di y\\
&\;-\int_{R^N} H^{\circ}\left(\nabla \left(\frac{1}{H(y)^{N-2}}\right)\right) \langle \nabla H^{\circ}\left(\nabla \left(\frac{1}{H(y)^{N-2}}\right) \right),\nabla (u^*(y)\phi (y))\rangle  \,\di y\,\\
&=\int_{R^N} \frac{1}{H(y)^{N+2}}f^*(y)\phi (y) \,\di y.
\end{split}
\end{equation}
Integrating by parts \eqref{eq:DualEq4}, we get
\begin{equation}
\begin{split}
&\int_{\R^N} H^{\circ}(\nabla \hat u(y)) \langle \nabla H^{\circ}(\nabla \hat u(y)), \nabla \phi (y) \rangle  \,\di y\\
&\quad +\int_{\R^N} \operatorname{div} \left( H^{\circ}\left(\nabla \left(\frac{1}{H(y)^{N-2}}\right)\right)\nabla H^{\circ}\left(\nabla \frac{1}{H(y)^{N-2}} \right) \right)u^*(y)\phi(y)  \,\di y\\
&=\int_{\R^N} \frac{1}{H(y)^{N+2}}f^*(y)\phi (y) \,\di y.
\end{split}
\end{equation}
Since $w(x):=H(x)^{2-N}$ is an $H^\circ$-harmonic function,
namely $\Delta^{H^\circ}w=0$, we get the thesis.

\end{proof}

To conclude, with the same spirit of the previous proof, we prove our last result.

\begin{proof}[Proof of Theorem \ref{eqNlaplace}]
Let us consider $\psi\in C^1_c(\R^N\setminus \{0\})$ and we set $\varphi:=\psi (T_{H^{\circ}})$. We note that $\varphi$ is a good test function and substituting in \eqref{eq:Nlaplaciano}, we get

\begin{equation}\label{eq:primaequazione3}
	    \begin{split}
    \int_{\R^N} H(\nabla u(x))^{N-1} \langle \nabla H(\nabla u(x)), \D T_{H^\circ}(x)\nabla \psi (T_{H^\circ}(x))\rangle\,&\di x\\
    =   \int_{\R^N} g(x) \psi (T_{H^\circ}(x))\,&\di x.
\end{split}
\end{equation}
We set $y=T_{H^\circ}(x)$, and using this variable change \eqref{eq:primaequazione3} becomes
\begin{equation}\label{eq:secondaquazione2}
    \begin{split}
    \int_{\R^N} H(\nabla u(T_H(y)))^{N-1} \langle \D T_{H^\circ}(T_H(y)) \nabla H(\nabla u(T_H(y))), \nabla \psi (y)\rangle J(y)\,&\di y\,\\
    =\int_{\R^N} g(T_H(y))\psi (y)J(y)\,&\di y.
\end{split}
\end{equation}
If we set $g^*(y):=g(T_H(y))$, and recalling 
\eqref{eq:propietajacobianifinsler} and Lemma \ref{determinantecostante},
\eqref{eq:secondaquazione2} becomes:
\begin{equation}\label{eq:terzaequzione3}
    \begin{split}
        \int_{\R^N} \frac{H^{N-2}(\nabla u(T_H(y)))}{H(y)^{2N-4}}H^{\circ}(\nabla u^*(y)) \langle \nabla H^{\circ}(\nabla u^*(y)), \nabla \psi (y) \rangle  \,&\di y\\
        =\int_{\R^N} \frac{g^*(y)\psi (y)}{H(y)^{2N}} \,&\di y.
    \end{split}
\end{equation}

Under assumption \eqref{eq:Hmatriciale}, it is easy to check 
\begin{equation}\label{proprietanormefinsler}
  H^{\circ}(DT_H(y)\xi)=\frac{H(\xi)}{H^2(y)} \quad \forall \xi \in \R^N.  
\end{equation}
By \eqref{proprietanormefinsler} we get the thesis.
\end{proof}

\end{document}